\pgfplotsset{compat=1.18}
\newcommand{\transp}{\mathsf{T}\!}
\begin{document}

\mainmatter              


\title{An Investigation into the Distribution of Ratios of Particle Solver-based Likelihoods}
\titlerunning{An Investigation into Ratios of Particle Solver-based Likelihoods}  
%
\author{Emil~Løvbak \and Sebastian~Krumscheid}
\authorrunning{Emil Løvbak and Sebastian Krumscheid} 
\institute{Karlsruhe Institute of Technology, Zirkel 2, 76133 Karlsruhe, Germany\\
\email{\{emil.loevbak,sebastian.krumscheid\}@kit.edu}}

\maketitle              

\begin{abstract}
We investigate the use of the Metropolis-Hastings algorithm to sample posterior distribution in a Bayesian inverse problem, where the likelihood function is random. Concretely, we consider the case where one has full field observations of a PDE solution, in case a one-dimensional diffusion equation, subject to a Gaussian observation error. Assuming one uses a particle-based Monte Carlo simulation when approximating the likelihood function, one gets an approximate likelihood with additive Gaussian noise in the log-likelihood. We study how these two Gaussian distributions affect the distribution of ratios of approximate likelihood evaluations, as required when evaluating acceptance probabilities in the Metropolis-Hastings algorithm. We do so through both theoretical analysis and numerical experiments.

\keywords{Bayesian inversion, Metropolis-Hastings, random likelihoods, Monte Carlo simulation}
\end{abstract}

\section{Introduction}
\label{sec:introduction}

We consider a diffusion equation with homogeneous diffusion coefficient on the one-dimensional spatial domain $x \in [0,L)$ and periodic boundaries, i.e.,
\begin{gather}
	\label{eq:diff_eq}
	\partial_t \rho(x,t) = D \partial_{xx} \rho(x,t),  \\
	\rho(0, t) = \rho(L,t), \qquad \partial_x \rho(0, t) = \partial_x \rho(L, t), \qquad \rho(x,0) = \rho_0(x) \geq 0. \label{eq:diff_eq_boundary}
\end{gather}
Here, $t \in \mathbb{R}_{\geq 0}$ is the time variable and $\rho_0(x)$ is a given initial condition. We approximate the solution to this equation using a Monte Carlo simulation. Such simulations discretize the equation at hand using an ensemble of particles. The particle dynamics are chosen such that the ensemble statistics converge to a (potentially biased) approximation of the true solution of \eqref{eq:diff_eq}--\eqref{eq:diff_eq_boundary} as the particle ensemble size $P\to\infty$. Although Monte Carlo simulation can not compete with deterministic approaches, such as finite differences~\cite{LeVeque2007} or spectral methods~\cite{Kopriva2009} for the problem at hand, we are motivated by their indispensable character in various high-impact domains, including nuclear fusion research~\cite{Feng1997, Reiter2005}, financial mathematics~\cite{Glasserman2003} and computer graphics~\cite{Pharr2023}, considering the diffusion equation as a straightforward, low-cost example.

We aim to solve a discretized statistical inverse problem of the form
\begin{equation}
	\label{eq:inverseProblem}
	\rho = G[D] + \eta, \qquad \eta \sim \mathcal{N}(0,\Sigma_\eta),
\end{equation}
where, $G[\cdot]: \mathbb{R} \mapsto \mathbb{R}^N$ is the solution map of \eqref{eq:diff_eq}--\eqref{eq:diff_eq_boundary} for a given diffusion coefficient $D \in \mathbb{R}$ on a discrete spatial grid. The observation error $\eta \in \mathbb{R}^N$ is an unknown multivariate random quantity, assumed normally distributed with a strictly positive definite covariance matrix $\Sigma_\eta = \Sigma_\eta^\transp \in \mathbb{R}_{\succ 0}^{N \times N}$. Although $\eta$ is modeled as a random variable, it takes a fixed, but unknown, value for a given observation $\rho$.

We wish to determine a reasonable value of $D$, given an observation $\rho \in \mathbb{R}^N$. However, we assume that we only have an approximate solution map
\begin{equation}
	\label{eq:randomForwardMap}
	\hat{G}[D] = G[D] + \delta, \qquad \delta \sim \mathcal{N}(\mu(D), \Sigma(D)),
	\end{equation}
resulting from the use of Monte Carlo simulation. Here, $\delta \in \mathbb{R}^N$ follows a normal distribution with mean $\mu: \mathbb{R} \mapsto \mathbb{R}^N$ and covariance matrix $\Sigma = \Sigma^\transp: \mathbb{R} \mapsto \mathbb{R}^{N \times N}_{\succ 0}$, parameterized by the parameter $D$. In other words, $\delta$ is modeled as a white-noise Gaussian random process~\cite{Rasmussen2005} with respect to $D$. We motivate the assumed distribution in Section~\ref{sec:mcfordiffusion}.

As $\hat{G}[\cdot]$ is not easily invertible, solving the inverse problem \eqref{eq:inverseProblem} requires multiple evaluations of \eqref{eq:diff_eq}--\eqref{eq:diff_eq_boundary}~\cite{Tarantola2005}. This fact, combined with the high computational cost of Monte Carlo simulation means a cost-accuracy trade-off must be made in the value of $P$. This trade-off is further exacerbated when multiple evaluations of the model are needed, such as when solving problems of the form \eqref{eq:inverseProblem}. One must then take into account both the stochastic nature of the approximation error, as well as the generally high simulation cost incurred.

Due to this computational cost, most efforts to solve inverse problems using Monte Carlo simulation have relied on neglecting the observation error $\eta$, to solve the inverse problem as an optimization problem, minimizing the residual $G[D]-\rho$ under a suitable norm. The main challenge in the setting given by \eqref{eq:randomForwardMap}, where the forward model contains a solver with stochastic errors, is computing gradients. Various approaches have been applied to tackle this challenge, notably adjoint methods, see e.g.~\cite{Dekeyser2018, Loevbak2024, Li2023c, Caflisch2021, Caflisch2024}, and algorithmic differentiation, see e.g.~\cite{Horsten2024a, Yilmazer2024, Zhang2020a}. However, in this work, we consider a Bayesian approach, where we approximate a posterior distribution of $D$ conditioned on the observation $\rho$. The Bayesian posterior is viewed as the solution to the inverse problem as it encodes all of the available information about the unknown $D$ given the data $\rho$.

Using Bayes' rule, we find that the probability density function (PDF) of the posterior for $D$ and a given $\rho$ is
\begin{equation}
	\label{eq:bayes}
	\pi_\text{post}(D|\rho) = \frac{\pi_0(D)\pi_\text{l}(\rho|D)}{\pi_\text{obs}(\rho)},
\end{equation}
where $\pi_0(\cdot)$ is the PDF of an assumed prior distribution on $D$, $\pi_\text{l}(\rho|D)$ is the likelihood of $\rho$ given $D$, and $\pi_\text{obs}(\cdot)$ is the marginal PDF of the given data $\rho$. We note that $\pi_\text{obs}(\rho)$ is generally unknown.

In our setting, the posterior given by \eqref{eq:bayes} will depend nonlinearly on the random error $\delta$, due to using the approximate solution map \eqref{eq:randomForwardMap} in the log-likelihood. Given a deterministic discretization error $\delta$, one can apply standard textbook results to bound the resulting error on the posterior \eqref{eq:bayes} in, e.g., Hellinger distance~\cite[Ch. 1]{Sanz-Alonso2023}. Similar results on posterior convergence were proven in~\cite{Lie2018a} for random errors $\delta$, under suitable conditions, for the limit $\delta \to 0$. However, we consider the case of non-negligible stochastic errors. To the best of our knowledge, no theoretical results on well-posedness exist in this setting. We focus on algorithmic analysis in this work, rather than the posterior itself.

Markov chain Monte Carlo methods (MCMC) are designed to sample the posterior given by \eqref{eq:bayes}, despite the unknown denominator. Given $\rho$, these methods construct a Markov chain of correlated samples of $D$ where each transition in the chain evaluates ratios of posterior probabilities, hence making the denominators cancel when deciding to accept or reject a proposed sample. The most straightforward approach is the Metropolis-Hastings algorithm~\cite{Metropolis1953,Hastings1970}; however, many notable variants and extensions have been developed, e.g., Hamiltonian Monte Carlo~\cite{Duane1987} that incorporates gradient information.

In the considered setting with stochastic errors \eqref{eq:randomForwardMap}, we note that the use of MCMC for sampling the posterior \eqref{eq:bayes} results in a nested sampling problem as each MCMC sample requires a full Monte Carlo simulation. Our interest lies in understanding the effect of non-negligible noise in the forward map. For inherently random but unbiased likelihoods, one can rely on pseudo-marginal MCMC methods~\cite{Andrieu2009}. Such likelihoods are often the result of a product or sum of individual contributions from different data points~\cite{Maclaurin2015} or simulation outputs~\cite{Warne2020}. Pseudo-marginal methods consider the combined probability space of the parameter to be inferred and the stochastic dimension of the forward map. The aim is to approximate the marginal distribution of the inferred parameter $D$, i.e., integrating out the forward map's stochasticity, using samples from this combined probability space. Although these methods are well studied, the assumption of unbiasedness means that they cannot be directly applied in our setting. We also note that similar work on nested sampling has been conducted in the setting of optimal experimental design, where nested Monte Carlo sampling is used to marginalize over so-called nuisance parameters~\cite{Bartuska2022, Feng2019}.

The goal of this work is to consider the non-asymptotic sampling regime in $P$, i.e., simulations with non-negligible and possibly biased stochastic error. We study how the Monte Carlo simulation error affects the ratio of computed approximate likelihood values, as a first step towards understanding the acceptance-rejection behavior of the Metropolis-Hastings algorithm in this setting. Notably, we observe that this ratio becomes a random variable, whose distribution we study in terms of its moments. We derive an explicit expression for these moments when they exist, which can inform practitioners in selecting a suitable value for $P$ that balances computational cost with accuracy on approximated acceptance rates. This line of work complements that presented in~\cite{Martinek2024Manual}, applying MCMC to Monte Carlo simulations in photoacoustic imaging.

The remainder of this paper is structured as follows. In Section~\ref{sec:mcfordiffusion} we introduce a simple Monte Carlo discretization for the one-dimensional diffusion equation and make observations on the structure of the resulting discretization error. Next, in Section~\ref{sec:MCMC} we present a theoretical analysis of the ratio of approximate likelihoods evaluated using such solvers, in terms of the moments of the ratio's distribution. This analysis is then further supported by experimental results in Section~\ref{sec:numerics}. Finally, in Section~\ref{sec:conclusion} we draw some conclusions and discuss some their potential for producing future algorithmic developments.

\section{Monte Carlo simulation of the diffusion equation}
\label{sec:mcfordiffusion}

In this Section, we derive a Monte Carlo discretization of the model \eqref{eq:diff_eq}--\eqref{eq:diff_eq_boundary} and comment on the structure of the resulting error. This derivation is purposefully brief. For a more detailed mathematical derivation of Monte Carlo simulation for parabolic PDEs, we refer to textbooks, such as \cite{Graham2013}. We assume, for simplicity, that $\rho(x,t) \geq 0$. As such, we can interpret $\rho(x,t)$ as a probability density over $x$ at any given time $t$. We then make use of the fact that \eqref{eq:diff_eq} is the Fokker-Planck equation to the stochastic differential equation (SDE)
\begin{equation}
	\label{eq:SDE}
	\text{d} X(t) = \sqrt{2D} \text{d} W(t), \qquad X(0) \overset{\text{i.i.d.}}{\sim} \rho_0,
\end{equation}
with $W(t)$ a standard 1D Wiener process, and the initial condition $X(0)$ is distributed according to the density $\rho_0$.

Given \eqref{eq:SDE}, we construct an ensemble of $P$ particles, defined by their positions
\begin{equation}
	\label{eq:particleensemble}
	\{X_{p,k}\}_{p=1}^P
\end{equation} at time $t_k=k\Delta t$. Using an Euler-Maruyama discretization, we propagate each particle $p$ in \eqref{eq:particleensemble} as a discretized realization of \eqref{eq:SDE}, i.e., for $p=1,\dots,P$
\begin{equation}
	\label{eq:EulerMaruyama}
	X_{p,k+1} = X_{p,k} + \sqrt{2D\Delta t}W_{p,k}, \qquad W_{p,k} \overset{\text{i.i.d.}}{\sim} \mathcal{N}(0,1), \qquad k \geq 0,
\end{equation}
and	$X_{p,0} \overset{\text{i.i.d.}}{\sim} \rho_0$.
Simulating such an ensemble, produces an ensemble of particles, where the values $X_{p,k}$ are distributed according to $\rho(x,t)$ for any $t_k = k\Delta t$.

Once we have the ensemble of particles \eqref{eq:particleensemble}, we apply a binning strategy to compute a solution on a grid with $N = \frac{L}{\Delta x}$ cells We estimate the density at the center of the $n$-th grid cell $x_n = \left( n + \frac{1}{2} \right)\Delta x$ as
\begin{equation}
	\label{eq:binning}
	\rho(x_n,k\Delta t) \approx \hat{\rho}_{n,k} = \frac{1}{P\Delta x} \sum_{p=1}^P \mathcal{I}_n(X_{p,k}), \qquad n = 0,\dots,N-1,
\end{equation}
where $\mathcal{I}_n$ is the indicator function for the grid cell with index $n$. Binning straightforwardly produces a vector of length $N$ representing the solution at a given moment in time as a histogram, but introduces two sources of error: (i) the approximation of the solution by a piecewise constant function introduces a bias due to spatial discretization; (ii) each particle can only contribute to a single cell at each time step, hence the computed results will have a high variance.

We define the error $\delta_{k} \in \mathbb{R}^N$ of the Monte Carlo simulation, at time $t_k=k\Delta t$, using the elementwise notation
\begin{equation}
	\label{eq:MCerror}
     \delta_{n,k} =	\rho(x_n,t_k) - \hat{\rho}_{n,k}, \qquad n = 0,\dots,N-1, \qquad k\geq 0.
\end{equation}
As the discretization error $\delta_{k}$ is stochastic and independent for every realization, we use mean-squared error (MSE) as an error metric. For notational simplicity, we do so at the level of individual cells. We decompose the error into the squared bias and variance, i.e.,
\begin{equation}
	\label{eq:errordecomposition}
	\mathbb{E}[\delta_{n,k}^2] = \mathbb{E}^2[\delta_{n,k}] + \mathbb{V}[\delta_{n,k}].
\end{equation}

\subsubsection{Bias.} For the homogeneous diffusion equation, the discretized dynamics \eqref{eq:EulerMaruyama} produced unbiased trajectories corresponding with realizations of the SDE \eqref{eq:SDE}, i.e., the time discretization does not introduce an error. We remark, however, that the discretization of more general SDEs will result in biased approximations. Here, the only sources of error lie in the estimation step. Specifically, \eqref{eq:binning} is a biased estimator, due to producing a histogram with bins of width $\Delta x$. By construction, the expectation of \eqref{eq:binning} is given by
\begin{equation}
	\mathbb{E}\left[\hat{\rho}_{n,k}\right] = \frac{1}{\Delta x} \int_{n\Delta x}^{(n+1)\Delta x} \rho(x,t_k) \text{d}x = \frac{F((n+1)\Delta x, t_k) - F(n\Delta x, t_k)}{\Delta x},
\end{equation} 
with $F(x,t)$ the primitive of $\rho(x,t)$, i.e., the cumulative distribution function of $X(t)$. Taking the Taylor series around an arbitrary point $x\in[n\Delta x, (n+1)\Delta x]$, we get that
\begin{equation}
	\mathbb{E}\left[\hat{\rho}_{n,k}\right] = \rho(x,t_k) + \left( \left( n + \frac{1}{2} \right) \Delta x - x \right)\frac{\partial \rho}{\partial x}(x,t_k) + \mathcal{O}(\Delta x^2).
\end{equation}
Hence, the values at the $x_n$ have a binning bias that decays with order two in $\Delta x\to 0$. For general values of $x$, we note that the function is approximated with order one in $\Delta x$ as 
\begin{equation}
 \left|	\left( n + \frac{1}{2} \right) \Delta x - x \right| \leq \frac{\Delta x}{2}.
\end{equation}

\subsubsection{Variance.} The finite ensemble size $P$ induces a sampling error that scales with the number of particles in a given cell. As each particle in the ensemble \eqref{eq:particleensemble} is fully independent and produces a single contribution to the binning estimator \eqref{eq:binning}, we observe by the central limit theorem that the variance of the individual grid cells scales with $\mathcal{O}\left( \frac{1}{P\Delta x} \right)$ as $P\Delta x \to \infty$. Moreover, the distribution of $\delta_{k}$ will converge to a multivariate Gaussian in this limit, with mean and variance corresponding to the bias and variance in the error decomposition \eqref{eq:errordecomposition}. Hence, motivating the assumed distribution in \eqref{eq:randomForwardMap}. Despite the trajectory independence, we note that the grid cells themselves are correlated due to the fact that each trajectory can only score in one cell at a given time step.

\section{Analysis of approximate likelihood ratios}
\label{sec:MCMC}

We now take a deeper dive into how a stochastic approximation error $\delta \in \mathbb{R}^N$, as specified in \eqref{eq:randomForwardMap}, affects ratios of approximate likelihood evaluations. In what follows, we drop the subscripts used in Section~\ref{sec:mcfordiffusion}, for notational simplicity. Throughout this section we make use of inner products and norms induced by symmetric strictly positive definite matrices. For example, given two arbitrary vectors $\alpha, \beta \in \mathbb{R}^N$ and a symmetric strictly positive definite matrix $Q = Q^\transp \in \mathbb{R}^{N\times N}_{\succ 0}$, we have that $\left\langle \alpha, \beta \right\rangle_Q = \alpha^\transp Q \beta$ and $\left\Vert \alpha\right\Vert_Q^2 = \alpha^\transp Q \alpha$.

Given a discretization error \eqref{eq:randomForwardMap}, we introduce an approximate likelihood function by plugging the difference $\rho-\hat{G}[D]$ into the PDF of the normal distribution modeling the additive observation error in \eqref{eq:inverseProblem}, i.e.,
\begin{equation}
\hat{l}(D) = \frac{1}{\sqrt{\left(2\pi\right)^N|\Sigma_\eta|}}\exp\left(-\frac{1}{2}\left\Vert \rho-\hat{G}[D]  \right\Vert^2_{\Sigma_\eta^{-1}}\right).
	\label{eq:likelihoodfunction}
\end{equation}

We now present this paper's core theoretical result, relating the covariance matrices of the distributions of the random variables $\delta$ and $\eta$ to the existence of moments of the distribution of ratios of approximate likelihoods \eqref{eq:likelihoodfunction}.

\begin{theorem}
	\label{thm:moments}
	Given two parameter values $D_1, D_2 \in \mathbb{R}$ and an approximate forward map $\hat{G}: \mathbb{R} \mapsto \mathbb{R}^N$ such that $\hat{G}[D] = G[D] + \delta$ with $\delta \overset{\text{i.i.d.}}{\sim} \mathcal{N}(\mu(D), \Sigma(D))$ where $\mu: \mathbb{R} \mapsto \mathbb{R}^N$ and $\Sigma = \Sigma^\transp: \mathbb{R} \mapsto \mathbb{R}^{N \times N}_{\succ 0}$. The approximate likelihood ratio under the assumption of Gaussian observation error \eqref{eq:inverseProblem}, denoted by
	\begin{equation}
		\label{eq:likelihoodratio}
	\frac{\hat{l}(D_1)}{\hat{l}(D_2)},
	\end{equation}
	is a random variable, whose $p$-th moment exists iff $p\Sigma_\eta \succ \Sigma(D_2)$, with $\succ$ denoting the Löwner order.
\end{theorem}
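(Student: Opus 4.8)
The key realization is that the likelihood ratio $\hat l(D_1)/\hat l(D_2)$ is an explicit function of the single Gaussian vector $\delta$ (for $D_2$) — well, actually of two independent Gaussians if the draws are independent, but the $D_1$-contribution is bounded since $\|\rho - \hat G[D_1]\|^2_{\Sigma_\eta^{-1}} \geq 0$ contributes a factor $\le 1$ in the numerator; so integrability is governed entirely by the denominator's $\delta_2 \sim \mathcal N(\mu(D_2), \Sigma(D_2))$. I would proceed as follows.

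First, write out the ratio explicitly. With $r_i := \rho - G[D_i]$ and independent errors $\delta_1, \delta_2$,
\begin{equation}
	\frac{\hat l(D_1)}{\hat l(D_2)} = \exp\!\left(-\tfrac12\|r_1 - \delta_1\|^2_{\Sigma_\eta^{-1}} + \tfrac12\|r_2 - \delta_2\|^2_{\Sigma_\eta^{-1}}\right).
\end{equation}
Raising to the $p$-th power and taking expectations, the $\delta_1$ factor is $\le 1$ and its expectation is finite (a Gaussian integral of $\exp$ of a negative-definite quadratic), so $\mathbb E[(\hat l(D_1)/\hat l(D_2))^p] < \infty$ if and only if
\begin{equation}
	\mathbb E_{\delta_2}\!\left[\exp\!\left(\tfrac{p}{2}\|r_2 - \delta_2\|^2_{\Sigma_\eta^{-1}}\right)\right] < \infty.
\end{equation}
Second, reduce this to a standard Gaussian integrability criterion. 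Substituting the density of $\delta_2$, the integrand becomes $\exp$ of the quadratic form $\tfrac{p}{2}(r_2-\delta_2)^\transp\Sigma_\eta^{-1}(r_2-\delta_2) - \tfrac12(\delta_2-\mu)^\transp\Sigma(D_2)^{-1}(\delta_2-\mu)$ in $\delta_2$. Collecting the quadratic-in-$\delta_2$ terms, the Hessian of the negative exponent is $\Sigma(D_2)^{-1} - p\,\Sigma_\eta^{-1}$. The Gaussian-type integral $\int \exp(-\tfrac12 \delta^\transp A \delta + b^\transp\delta)\,d\delta$ converges iff $A \succ 0$; here $A = \Sigma(D_2)^{-1} - p\Sigma_\eta^{-1}$, and I'd note that $\Sigma(D_2)^{-1} - p\Sigma_\eta^{-1} \succ 0 \iff p\Sigma_\eta \succ \Sigma(D_2)$ by a congruence transformation (conjugating by $\Sigma_\eta^{1/2}$ and $\Sigma(D_2)^{1/2}$, using that $X \succ Y \succ 0 \iff Y^{-1} \succ X^{-1}$). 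When $A \succ 0$ the integral evaluates to a finite constant times $\exp$ of a finite quadratic in $r_2, \mu$; when $A$ has a non-positive eigenvalue the integral diverges (the integrand fails to decay, in fact blows up, along the corresponding eigendirection). Combining both directions gives the "iff".

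**Main obstacle.** The routine part is the Gaussian bookkeeping; the step needing care is the \emph{divergence} direction — showing that when $p\Sigma_\eta \succ \Sigma(D_2)$ \emph{fails}, the $p$-th moment is genuinely $+\infty$ rather than merely "not obviously finite." One must rule out a fortuitous cancellation with the (bounded, $\le 1$) $\delta_1$-factor: since $\delta_1$ and $\delta_2$ are independent and the $\delta_1$-integral is a strictly positive constant independent of $\delta_2$, Tonelli's theorem lets me factor the expectation, so no cancellation is possible and divergence of the $\delta_2$-integral forces divergence of the whole moment. I would also need to handle the degenerate case where $A$ is positive semidefinite but singular (an eigenvalue exactly zero, i.e. $p\Sigma_\eta \succeq \Sigma(D_2)$ with equality in some direction): here the integrand is constant along that eigendirection (times possibly a linear factor $\exp(b^\transp\delta)$), so the integral still diverges — consistent with the strict inequality $p\Sigma_\eta \succ \Sigma(D_2)$ in the statement. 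A secondary point to state carefully: the claim implicitly treats $p$ as a positive real (the moment condition and the Löwner inequality both make sense for non-integer $p$), so I would phrase the Gaussian criterion for general $p>0$ rather than only integer $p$, which costs nothing in the argument.
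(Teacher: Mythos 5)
Your proposal is correct and follows essentially the same route as the paper's proof: factor the $p$-th moment via independence of $\delta_1$ and $\delta_2$, observe the $\delta_1$-factor is a convergent Gaussian integral, and reduce existence to positive-definiteness of the precision matrix $\Sigma(D_2)^{-1}-p\Sigma_\eta^{-1}$ arising from completing the square in the $\delta_2$-factor, including the careful treatment of the singular boundary case (which the paper handles via the pseudoinverse $S_2^\dagger$ and a kernel/range splitting). The only cosmetic difference is that the paper also records the closed-form value of each factor when the moment exists, which your finiteness-only argument does not need for the stated ``iff.''
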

\begin{proof}
	We introduce notation $\mu_i = \mu(D_i)$, $\Sigma_i = \Sigma(D_i)$, $\Delta\rho_i = \rho - G[D_i]$ and $\delta_i \sim \mathcal{N}(\mu_i, \Sigma_i)$ for $i=1,2$. Subsequently, we write
	\begin{equation}
		\label{eq:relativelikelihooderror}
		\frac{\hat{l}(D_1)}{\hat{l}(D_2)} =  \exp\left( \frac{\left\Vert \Delta\rho_2 - \delta_2 \right\Vert^2_{\Sigma_\eta^{-1}} - \left\Vert \Delta\rho_1 - \delta_1 \right\Vert^2_{\Sigma_\eta^{-1}}}{2}  \right),
	\end{equation}
	taking into account that the normalizing constants in \eqref{eq:likelihoodfunction} are independent of $\delta_1$ and $\delta_2$.
	As noted in Section~\ref{sec:mcfordiffusion}, $\delta_1$ and $\delta_2$ are independent. Hence, the $p$-th raw moment of \eqref{eq:relativelikelihooderror} is given by the product
	\begin{equation}
		\label{eq:productofexpectationsp}
		\mathbb{E}_{\delta_1}\left[ \exp\left( - p \frac{\left\Vert \Delta\rho_1 - \delta_1 \right\Vert^2_{\Sigma_\eta^{-1}}}{2} \right) \right] \mathbb{E}_{\delta_2}\left[ \exp\left( p \frac{\left\Vert \Delta\rho_2 - \delta_2 \right\Vert^2_{\Sigma_\eta^{-1}}}{2} \right) \right],
	\end{equation}
	which we rewrite using the substitution $M^{-1} = p\Sigma_\eta^{-1}$ to get
	\begin{equation}
		\label{eq:productofexpectations}
		\mathbb{E}_{\delta_1}\left[ \exp\left( - \frac{\left\Vert \delta_1 -  \Delta\rho_1 \right\Vert^2_{M^{-1}}}{2} \right) \right] \mathbb{E}_{\delta_2}\left[ \exp\left( \frac{\left\Vert \delta_2 - \Delta\rho_2 \right\Vert^2_{M^{-1}}}{2} \right) \right].
	\end{equation}
	
	We separately work out the two expectations. The first factor of \eqref{eq:productofexpectations} becomes 
	\begin{equation}
		\label{eq:expectation1}
		 \bigintsss_{\mathbb{R}^N}\!\! \exp\left( - \frac{\left\Vert \delta_1 -  \Delta\rho_1 \right\Vert^2_{M^{-1}}}{2} \right)\frac{1}{\sqrt{|2\pi\Sigma_1|}} \exp \left( - \frac{\left\Vert \delta_1-\mu_1 \right\Vert^2_{\Sigma_1^{-1}}}{2} \right) \text{d}\delta_1,
	\end{equation}
	 which we interpret as a product of Gaussian PDFs for $\delta_1$. We now show that this product is proportional to a single Gaussian PDF with precision matrix $S_1^{-1} = {M^{-1}}+\Sigma_1^{-1}$. Note that $S_1^{-1}$ is strictly positive definite as the sum of strictly positive definite matrices. Hence, we introduce $\alpha_1 = S_1 \left( {M^{-1}}\Delta \rho_1 + \Sigma_1^{-1} \mu_1 \right)$ as the mean of the Gaussion proportional to the integrand and work out that
	\begin{align}
		&\left\Vert \delta_1 -  \Delta\rho_1 \right\Vert^2_{M^{-1}} + \left\Vert \delta_1-\mu_1 \right\Vert^2_{\Sigma_1^{-1}} \\
		& \quad = \left\Vert \delta_1 \right\Vert^2_{M^{-1}} - 2\left\langle \delta_1, \Delta \rho_1 \right\rangle_{M^{-1}} + \left\Vert \Delta \rho_1 \right\Vert^2_{M^{-1}} + \left\Vert \delta_1 \right\Vert^2_{\Sigma_1^{-1}} - 2\left\langle \delta_1, \mu_1 \right\rangle_{\Sigma_1^{-1}} + \left\Vert \mu_1 \right\Vert^2_{\Sigma_1^{-1}} \\
		& \quad = \left\Vert \delta_1 \right\Vert^2_{S_1^{-1}} - 2 \left\langle \delta_1, \alpha_1 \right\rangle_{S_1^{-1}} + \left( \left\Vert\alpha_1\right\Vert^2_{S_1^{-1}} - \left\Vert\alpha_1\right\Vert^2_{S_1^{-1}} \right) + \left\Vert\Delta \rho_1\right\Vert^2_{M^{-1}} + \left\Vert\mu_1\right\Vert^2_{\Sigma_1^{-1}} \\
		& \quad = \left\Vert \delta_1 - \alpha_1 \right\Vert^2_{S_1^{-1}} - \left\Vert\alpha_1\right\Vert^2_{S_1^{-1}} + \left\Vert\Delta \rho_1\right\Vert^2_{M^{-1}} + \left\Vert\mu_1\right\Vert^2_{\Sigma_1^{-1}} \\
		& \quad \triangleq \left\Vert \delta_1 - \alpha_1 \right\Vert^2_{S_1^{-1}} + 2C_1,
	\end{align}
	where we use $\triangleq$ to denote that we have defined the $\delta_1$-independent constant $C_1$.
	
	We now write \eqref{eq:expectation1} as
	\begin{align}
		&\bigintsss_{\mathbb{R}^N}\!\! \exp\left( - \frac{\left\Vert \delta_1 -  \alpha_1 \right\Vert^2_{S_1^{-1}}}{2} \right)\frac{1}{\sqrt{|2\pi\Sigma_1|}} \exp \left( - C_1 \right) \text{d}\delta_1 \\
		&= \bigintsss_{\mathbb{R}^N}\!\! \exp\left( - \frac{\left\Vert \delta_1 -  \alpha_1 \right\Vert^2_{S_1^{-1}}}{2} \right)\frac{1}{\sqrt{|2\pi\Sigma_1|}} \exp \left( - C_1 \right) \text{d}\delta_1 \\
		&= \sqrt{\frac{|S_1|}{|\Sigma_1|}}\exp\left( - C_1\right) \label{eq:productresult}.
	\end{align}

	The second factor in \eqref{eq:productofexpectations} requires a bit more care due to the differing signs. We first write the factor as
	\begin{equation}
		\label{eq:expectation2}
		\bigintsss_{\mathbb{R}^N}\!\! \exp\left(\frac{\left\Vert \delta_2 -  \Delta\rho_2 \right\Vert^2_{M^{-1}}}{2} \right)\frac{1}{\sqrt{|2\pi \Sigma_2|}} \exp \left( - \frac{\left\Vert \delta_2-\mu_2 \right\Vert^2_{\Sigma_2^{-1}}}{2} \right) \text{d}\delta_2.
	\end{equation}
	 We define $S_2^\dagger = \Sigma_2^{-1}-{M^{-1}}$ and $\gamma_2 = \left(\Sigma_2^{-1} \mu_2 - {M^{-1}}\Delta \rho_2 \right)$. Note that we consider $S_2^\dagger$ as the Moore-Penrose pseudoinverse of a not yet introduced matrix $S_2$, as it may be singular. We write $\delta_2 = \delta_{2,0} + \delta_{2,\perp}$ and $\gamma_2 = \gamma_{2,0} + \gamma_{2,\perp}$, with $\delta_{2,0}, \gamma_{2,0} \in \ker (S_2^\dagger)$ and $\delta_{2,\perp}, \gamma_{2,\perp}\in \text{range}(S_2^\dagger)$. We then define $\alpha_2 = S_2 \gamma_2 = S_2 \gamma_{2,\perp}$. For compactness, we allow some abuse of notation, still using norms and inner product notation weighted by the potentially indefinite matrix $S_2^\dagger$. 
	 
	 Following a similar argument to before, and making use of the fact that $\delta_{2,0} \perp \gamma_{2,\perp}$, we get
	\begin{align}
		&\left\Vert \delta_2-\mu_2 \right\Vert^2_{\Sigma_2^{-1}} - \left\Vert \delta_2 -  \Delta\rho_2 \right\Vert^2_{M^{-1}}\\
		& =  \left\Vert \delta_2 \right\Vert^2_{\Sigma_2^{-1}} - 2\left\langle \delta_2, \mu_2 \right\rangle_{\Sigma_2^{-1}} + \left\Vert \mu_2 \right\Vert^2_{\Sigma_2^{-1}} - \left\Vert \delta_2 \right\Vert^2_{M^{-1}} + 2\left\langle \delta_2, \Delta \rho_2 \right\rangle_{M^{-1}} - \left\Vert \Delta \rho_2 \right\Vert^2_{M^{-1}}\\
		& = \left\Vert \delta_{2,\perp}\right\Vert^2_{S_2^\dagger} - 2 \left\langle \delta_{2,\perp}, \alpha_2 \right\rangle_{S_2^\dagger} - 2 \left \langle \delta_{2,0}, \gamma_{2,0} \right\rangle + \left\Vert \mu_2 \right\Vert^2_{\Sigma_2^{-1}} - \left\Vert \Delta \rho_2 \right\Vert^2_{M^{-1}} \\
		&= \left\Vert \delta_{2,\perp} - \alpha_2 \right\Vert^2_{S_2^\dagger}  - 2 \left \langle \delta_{2,0}, \gamma_{2,0} \right\rangle  - \left\Vert \alpha_2 \right\Vert^2_{S_2^\dagger} +  \left\Vert \mu_2 \right\Vert^2_{\Sigma_2^{-1}} - \left\Vert \Delta \rho_2 \right\Vert^2_{M^{-1}} \\
		& \triangleq \left\Vert \delta_{2,\perp} - \alpha_2 \right\Vert^2_{S_2^\dagger} - 2 \left \langle \delta_{2,0}, \gamma_{2,0} \right\rangle   + 2C_2
	\end{align}
	where, similar to $C_1$, $C_2$ is a constant term, independent of $\delta_2$.
	
	We now work out \eqref{eq:expectation2} as 
	\begin{equation}
		\bigintsss_{\mathbb{R}^N}\!\! \exp\left( - \frac{\left\Vert \delta_{2,\perp} -  \alpha_2 \right\Vert^2_{S_2^\dagger}}{2} \right) \exp \left( \left \langle \delta_{2,0}, \gamma_{2,0} \right\rangle  \right) \frac{1}{\sqrt{|2\pi \Sigma_2|}} \exp \left( - C_2 \right) \text{d}\delta_2 \label{eq:ratiointegral}.
	\end{equation}
	We observe that the integrand is a smooth function that only vanishes at infinity if (i) $\delta_{2,0}$ remains bounded and (ii) the nonzero eigenvalues of $S_2^\dagger$ are positive. Condition (i) can only be met if $\delta_{2,0} = 0$ and $S_2^\dagger$ must be invertible and thus strictly positive definite. We therefore state $\forall \delta_2 \in \mathbb{R}^N: \delta_2 = \delta_{2,\perp}$. Hence, \eqref{eq:ratiointegral} evaluates to
	\begin{equation}
		\sqrt{\frac{|S_2|}{|\Sigma_2|}}\exp\left( - C_2\right) \label{eq:ratioresult}.
	\end{equation}
	
	We combine integrability condition (ii) for \eqref{eq:ratiointegral} with the fact that \eqref{eq:expectation1} is strictly positive and the definition of $M$ to conclude that the $p$-th moment only exists iff $\Sigma_2^{-1} \succ p\Sigma_\eta^{-1}$. From the definition of the Löwner order, one can show that this condition is equivalent to $p\Sigma_\eta \succ \Sigma(D_2)$.
	 \qed
\end{proof}

\begin{remark}
	During the proof of Theorem~\ref{thm:moments}, we make extensive use of the properties of normal distributions to compute bounds for the moments of the distribution of the approximate likelihood ratio \eqref{eq:likelihoodratio}. However, we note that this ratio itself will not be normally distributed as it only has finitely many moments.
\end{remark}

\begin{remark}
	The proof of Theorem~\ref{thm:moments} is quite verbose, in order to derive explicit expressions for the $p$-th moment of the distribution of the ratio \eqref{eq:likelihoodratio}. One can follow a much shorter argument to state that there exists an upper bound for $\Sigma(D_2)$ as a function of the number of desired moments, for example by observing that the numerator of \eqref{eq:likelihoodratio} is bounded and applying Fernique's theorem~\cite{DaPrato2014} to the denominator.
\end{remark}

\section{Numerical experiments}
\label{sec:numerics}

We now perform a numerical experiment to confirm the observations in Theorem~\ref{thm:moments}. To this end, we use the model problem \eqref{eq:diff_eq}--\eqref{eq:diff_eq_boundary} and consider transitions between fixed values for $D_1$ and $D_2$. We produce synthetic observations, using a finite difference discretization whose solution is perturbed with noise distributed according to \eqref{eq:inverseProblem}. Throughout this section we set $N=100$, $L=10$, $t=10$ and $D^\ast=0.1$. The reference solution to produce the observation is computed with $\Delta t = 0.1$, while the Monte Carlo simulations use $\Delta t = t$ to cut on computational cost, thanks to the unbiasedness of \eqref{eq:EulerMaruyama}. For $D_1$ and $D_2$, we consider differing combinations of these parameters from the set $\{0.08, 0.1\}$, i.e., the true value and a similar, but less probable, value. The code used to generate the results in this section can be found at \url{github.com/UQatKIT/FrontUQ-2024-proceedings-likelihood-ratios}.

To enable visualization, we simplify the covariance matrices of the distributions of $\eta$ and $\delta$ as follows. We fix $\Sigma_\eta = \sigma_\eta^2 I$, with $I$ the identity matrix. We simplify the Monte Carlo covariance to a scalar variance $\sigma_\delta^2$, by taking the maximum over all histogram cells, i.e.,
\begin{equation}
	\label{eq:sigmadelta}
	\sigma_\delta^2=\max(\text{diag}(\Sigma(D))),
\end{equation}
for the given value $D$. Taking the maximum is motivated by the resulting value being robust to the effects of bins with zero or very few particles. All expectations in this section are computed using Monte Carlo sampling with 1000 random realizations of the observation $\rho$. To generate each curve in the plots in this section, we take $P=10^2,\dots,10^6$ increasing in multiples of 10 and estimating $\sigma_\delta$ as specified in \eqref{eq:sigmadelta}. In some cases, in particular for small $P$, we get invalid results for the quantities being plotted. These invalid results result from the exponential in \eqref{eq:likelihoodfunction} evaluating to zero due to highly unlikely solver outputs. Taking the ratio of expectations then produces NaN values. As both zero and NaN values cannot be sensibly plotted in a log scale, these points are omitted from the figures.

Before considering expectations of ratios of approximate likelihoods, we first present the expectations of the approximate likelihoods themselves in Figure~\ref{fig:likelihoods}. We  observe that the expected approximate likelihood for $D_1=0.1$ (Figure~\ref{fig:likelystate}) is consistently larger than that for $D_2=0.08$ (Figure~\ref{fig:unlikelystate}), as can be seen in Figure~\ref{fig:ratioofexpectations} that shows the ratio of the expectations of these approximate likelihoods. We also note the sharp drop in the expectation of the approximate likelihood for both $D_1$ and $D_2$ once $\sigma_\eta<\sigma_\delta$.  As one would expect, for larger values of $P$, the ratio of expectations in Figure~\ref{fig:ratioofexpectations} converges to one as $\sigma_\eta$ increases.

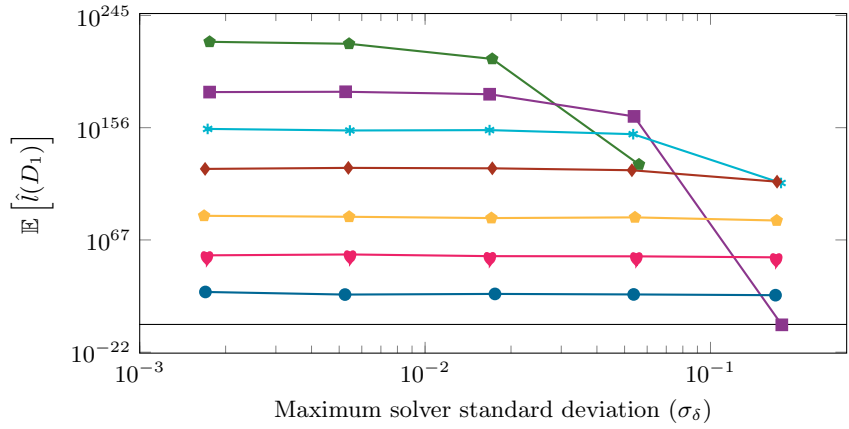
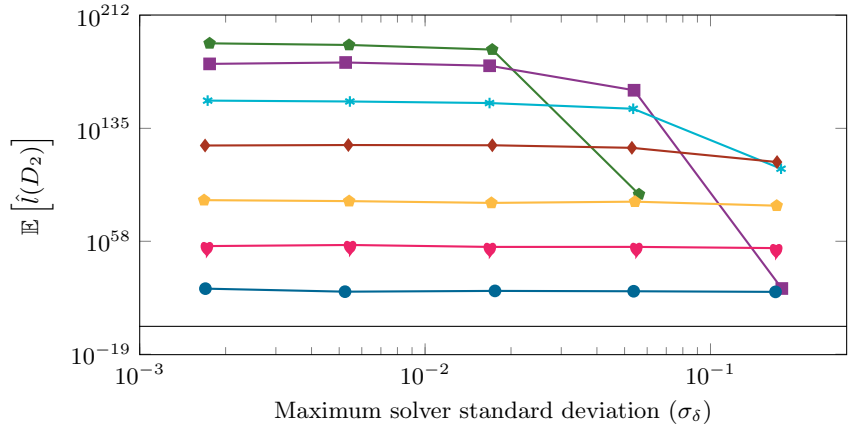
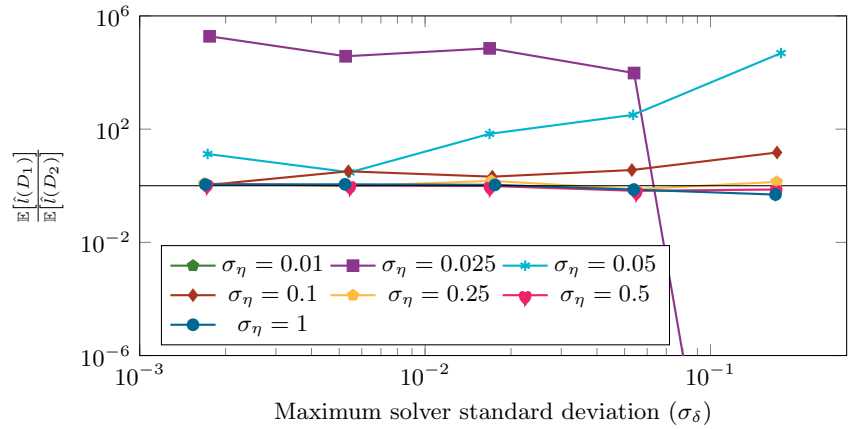
\begin{figure}
	\centering
	\begin{subfigure}{\linewidth}
		\centering
		\begin{tikzpicture}[trim axis left]
			\begin{axis}[
				xlabel={Maximum solver standard deviation ($\sigma_\delta$)},
				ylabel={$\mathbb{E}\left[\hat{l}(D_1) \right]$},
				xmode=log,
				ymode=log,
				xmin=1e-3,
				xmax=0.3,
				legend pos=south west,
				legend columns=3,
				width=0.9\textwidth,
				height=0.5\textwidth
				]
				\addplot[
				color=OliveGreen,
				mark=pentagon*,
				thick
				]
				table[
				col sep=comma,
				x=maxsolverstdsigma0.01,
				y=meansigma0.01,
				] {numerical_results/plotdata_proposal_likelihood_true_0.1_current_0.08_proposal_0.1_samples_1000.csv};
				\addlegendentry{$\sigma_\eta=0.01$}
				\addplot[
				color=Fuchsia,
				mark=square*,
				thick
				]
				table[
				col sep=comma,
				x=maxsolverstdsigma0.025,
				y=meansigma0.025,
				] {numerical_results/plotdata_proposal_likelihood_true_0.1_current_0.08_proposal_0.1_samples_1000.csv};
				\addlegendentry{$\sigma_\eta=0.025$}
				\addplot[
				color=Turquoise,
				mark=asterisk,
				thick
				]
				table[
				col sep=comma,
				x=maxsolverstdsigma0.05,
				y=meansigma0.05,
				] {numerical_results/plotdata_proposal_likelihood_true_0.1_current_0.08_proposal_0.1_samples_1000.csv};
				\addlegendentry{$\sigma_\eta=0.05$}
				\addplot[
				color=Mahogany,
				mark=diamond*,
				thick
				]
				table[
				col sep=comma,
				x=maxsolverstdsigma0.1,
				y=meansigma0.1,
				] {numerical_results/plotdata_proposal_likelihood_true_0.1_current_0.08_proposal_0.1_samples_1000.csv};
				\addlegendentry{$\sigma_\eta=0.1$}
				\addplot[
				color=Dandelion,
				mark=pentagon*,
				thick
				]
				table[
				col sep=comma,
				x=maxsolverstdsigma0.25,
				y=meansigma0.25,
				] {numerical_results/plotdata_proposal_likelihood_true_0.1_current_0.08_proposal_0.1_samples_1000.csv};
				\addlegendentry{$\sigma_\eta=0.25$}
				\addplot[
				color=WildStrawberry,
				mark=heart,
				thick
				]
				table[
				col sep=comma,
				x=maxsolverstdsigma0.5,
				y=meansigma0.5,
				] {numerical_results/plotdata_proposal_likelihood_true_0.1_current_0.08_proposal_0.1_samples_1000.csv};
				\addlegendentry{$\sigma_\eta=0.5$}
				\addplot[
				color=MidnightBlue,
				mark=*,
				thick
				]
				table[
				col sep=comma,
				x=maxsolverstdsigma1,
				y=meansigma1,
				] {numerical_results/plotdata_proposal_likelihood_true_0.1_current_0.08_proposal_0.1_samples_1000.csv};
				\addlegendentry{$\sigma_\eta=1$}
				\addplot[color=black, mark=none, domain=0.5:2e-4]{1};
				\legend{};
			\end{axis}
		\end{tikzpicture}
		\caption{Expectation of the proposal likelihood at $D_1=0.1$. \label{fig:likelystate}}
	\end{subfigure}
	\par\bigskip
	\begin{subfigure}{\linewidth}
		\centering
		\begin{tikzpicture}[trim axis left]
			\begin{axis}[
				xlabel={Maximum solver standard deviation ($\sigma_\delta$)},
				ylabel={$\mathbb{E}\left[\hat{l}(D_2) \right]$},
				xmode=log,
				ymode=log,
				xmin=1e-3,
				xmax=0.3,
				legend pos=north east,
				width=0.9\textwidth,
				height=0.5\textwidth
				]
				\addplot[
				color=OliveGreen,
				mark=pentagon*,
				thick
				]
				table[
				col sep=comma,
				x=maxsolverstdsigma0.01,
				y=meansigma0.01,
				] {numerical_results/plotdata_current_likelihood_true_0.1_current_0.08_proposal_0.1_samples_1000.csv};
				\addplot[
				color=Fuchsia,
				mark=square*,
				thick
				]
				table[
				col sep=comma,
				x=maxsolverstdsigma0.025,
				y=meansigma0.025,
				] {numerical_results/plotdata_current_likelihood_true_0.1_current_0.08_proposal_0.1_samples_1000.csv};
				\addplot[
				color=Turquoise,
				mark=asterisk,
				thick
				]
				table[
				col sep=comma,
				x=maxsolverstdsigma0.05,
				y=meansigma0.05,
				] {numerical_results/plotdata_current_likelihood_true_0.1_current_0.08_proposal_0.1_samples_1000.csv};
				\addplot[
				color=Mahogany,
				mark=diamond*,
				thick
				]
				table[
				col sep=comma,
				x=maxsolverstdsigma0.1,
				y=meansigma0.1,
				] {numerical_results/plotdata_current_likelihood_true_0.1_current_0.08_proposal_0.1_samples_1000.csv};
				\addplot[
				color=Dandelion,
				mark=pentagon*,
				thick
				]
				table[
				col sep=comma,
				x=maxsolverstdsigma0.25,
				y=meansigma0.25,
				] {numerical_results/plotdata_current_likelihood_true_0.1_current_0.08_proposal_0.1_samples_1000.csv};
				\addplot[
				color=WildStrawberry,
				mark=heart,
				thick
				]
				table[
				col sep=comma,
				x=maxsolverstdsigma0.5,
				y=meansigma0.5,
				] {numerical_results/plotdata_current_likelihood_true_0.1_current_0.08_proposal_0.1_samples_1000.csv};
				\addplot[
				color=MidnightBlue,
				mark=*,
				thick
				]
				table[
				col sep=comma,
				x=maxsolverstdsigma1,
				y=meansigma1,
				] {numerical_results/plotdata_current_likelihood_true_0.1_current_0.08_proposal_0.1_samples_1000.csv};
				\addplot[color=black, mark=none, domain=0.5:2e-4]{1};
			\end{axis}
		\end{tikzpicture}
		\caption{Expectation of the current state's likelihood at $D_2=0.08$. \label{fig:unlikelystate}}
	\end{subfigure}
	\par\bigskip
	\begin{subfigure}{\linewidth}
		\centering
		\begin{tikzpicture}[trim axis left]
			\begin{axis}[
				xlabel={Maximum solver standard deviation ($\sigma_\delta$)},
				ylabel={$\frac{\mathbb{E}\left[\hat{l}(D_1)\right]}{\mathbb{E}\left[\hat{l}(D_2)\right]}$},
				xmode=log,
				ymode=log,
				xmin=1e-3,
				xmax=0.3,
				ymax=1e6,
				ymin=1e-6,
				legend pos=south west,
				legend columns=3,
				width=0.9\textwidth,
				height=0.5\textwidth
				]
				\addplot[
				color=OliveGreen,
				mark=pentagon*,
				thick
				]
				table[
				col sep=comma,
				x=maxsolverstdsigma0.01,
				y=ratiosigma0.01,
				] {numerical_results/plotdata_ratio_expectations_true_0.1_current_0.08_proposal_0.1_samples_1000.csv};
				\addlegendentry{$\sigma_\eta=0.01$}
				\addplot[
				color=Fuchsia,
				mark=square*,
				thick
				]
				table[
				col sep=comma,
				x=maxsolverstdsigma0.025,
				y=ratiosigma0.025,
				] {numerical_results/plotdata_ratio_expectations_true_0.1_current_0.08_proposal_0.1_samples_1000.csv};
				\addlegendentry{$\sigma_\eta=0.025$}
				\addplot[
				color=Turquoise,
				mark=asterisk,
				thick
				]
				table[
				col sep=comma,
				x=maxsolverstdsigma0.05,
				y=ratiosigma0.05,
				] {numerical_results/plotdata_ratio_expectations_true_0.1_current_0.08_proposal_0.1_samples_1000.csv};
				\addlegendentry{$\sigma_\eta=0.05$}
				\addplot[
				color=Mahogany,
				mark=diamond*,
				thick
				]
				table[
				col sep=comma,
				x=maxsolverstdsigma0.1,
				y=ratiosigma0.1,
				] {numerical_results/plotdata_ratio_expectations_true_0.1_current_0.08_proposal_0.1_samples_1000.csv};
				\addlegendentry{$\sigma_\eta=0.1$}
				\addplot[
				color=Dandelion,
				mark=pentagon*,
				thick
				]
				table[
				col sep=comma,
				x=maxsolverstdsigma0.25,
				y=ratiosigma0.25,
				] {numerical_results/plotdata_ratio_expectations_true_0.1_current_0.08_proposal_0.1_samples_1000.csv};
				\addlegendentry{$\sigma_\eta=0.25$}
				\addplot[
				color=WildStrawberry,
				mark=heart,
				thick
				]
				table[
				col sep=comma,
				x=maxsolverstdsigma0.5,
				y=ratiosigma0.5,
				] {numerical_results/plotdata_ratio_expectations_true_0.1_current_0.08_proposal_0.1_samples_1000.csv};
				\addlegendentry{$\sigma_\eta=0.5$}
				\addplot[
				color=MidnightBlue,
				mark=*,
				thick
				]
				table[
				col sep=comma,
				x=maxsolverstdsigma1,
				y=ratiosigma1,
				] {numerical_results/plotdata_ratio_expectations_true_0.1_current_0.08_proposal_0.1_samples_1000.csv};
				\addlegendentry{$\sigma_\eta=1$}
				\addplot[color=black, mark=none, domain=0.5:2e-4]{1};
			\end{axis}
		\end{tikzpicture}
		\caption{Ratio of the expectations of the proposal and current state's likelihoods. The curve for $\sigma_\eta=0.01$ is not visible due to taking values between $10^{21}$ and $10^{37}$. \label{fig:ratioofexpectations}}
	\end{subfigure}
	\caption{Computing the expectation of the approximate likelihoods and the ratio of these expectations, given a discrete solution of the problem \eqref{eq:diff_eq}--\eqref{eq:diff_eq_boundary} perturbed by synthetic observation noise, for $D^\ast = D_1 =0.1$ and $D_2 = 0.08$. \label{fig:likelihoods}}
\end{figure}

\begin{figure}
	\centering
	\begin{subfigure}{\linewidth}
		\centering
		\begin{tikzpicture}[trim axis left]
			\begin{axis}[
				xlabel={Maximum solver standard deviation ($\sigma_\delta$)},
				ylabel={$\mathbb{E}\left[\frac{\hat{l}(D_1)}{\hat{l}(D_2)} \right]$},
				xmode=log,
				ymode=log,
				xmin=1e-3,
				xmax=0.3,
				ymax=1e35,
				legend pos=north west,
				legend columns=1,
				width=0.9\textwidth,
				height=0.5\textwidth
				]
				\addplot[
				color=OliveGreen,
				mark=pentagon*,
				thick
				]
				table[
				col sep=comma,
				x=maxsolverstdsigma0.01,
				y=meansigma0.01,
				] {numerical_results/plotdata_likelihood_ratio_true_0.1_current_0.1_proposal_0.1_samples_1000.csv};
				\addlegendentry{$\sigma_\eta=0.01$}
				\addplot[
				color=Fuchsia,
				mark=square*,
				thick
				]
				table[
				col sep=comma,
				x=maxsolverstdsigma0.025,
				y=meansigma0.025,
				] {numerical_results/plotdata_likelihood_ratio_true_0.1_current_0.1_proposal_0.1_samples_1000.csv};
				\addlegendentry{$\sigma_\eta=0.025$}
				\addplot[
				color=Turquoise,
				mark=asterisk,
				thick
				]
				table[
				col sep=comma,
				x=maxsolverstdsigma0.05,
				y=meansigma0.05,
				] {numerical_results/plotdata_likelihood_ratio_true_0.1_current_0.1_proposal_0.1_samples_1000.csv};
				\addlegendentry{$\sigma_\eta=0.05$}
				\addplot[
				color=Mahogany,
				mark=diamond*,
				thick
				]
				table[
				col sep=comma,
				x=maxsolverstdsigma0.1,
				y=meansigma0.1,
				] {numerical_results/plotdata_likelihood_ratio_true_0.1_current_0.1_proposal_0.1_samples_1000.csv};	
				\addlegendentry{$\sigma_\eta=0.1$}
				\addplot[
				color=Dandelion,
				mark=pentagon*,
				thick
				]
				table[
				col sep=comma,
				x=maxsolverstdsigma0.25,
				y=meansigma0.25,
				] {numerical_results/plotdata_likelihood_ratio_true_0.1_current_0.1_proposal_0.1_samples_1000.csv};
				\addlegendentry{$\sigma_\eta=0.25$}
				\addplot[
				color=WildStrawberry,
				mark=heart,
				thick
				]
				table[
				col sep=comma,
				x=maxsolverstdsigma0.5,
				y=meansigma0.5,
				] {numerical_results/plotdata_likelihood_ratio_true_0.1_current_0.1_proposal_0.1_samples_1000.csv};
				\addlegendentry{$\sigma_\eta=0.5$}
				\addplot[
				color=MidnightBlue,
				mark=*,
				thick
				]
				table[
				col sep=comma,
				x=maxsolverstdsigma1,
				y=meansigma1,
				] {numerical_results/plotdata_likelihood_ratio_true_0.1_current_0.1_proposal_0.1_samples_1000.csv};
				\addlegendentry{$\sigma_\eta=1$}
				\addplot[color=black, mark=none, domain=0.5:2e-4]{1};
			\end{axis}
		\end{tikzpicture}
		\caption{Expectation of the approximate likelihood ratio.\label{fig:likelytolikelyratio}}
	\end{subfigure}
	\par\bigskip
	\begin{subfigure}{\linewidth}
		\centering
		\begin{tikzpicture}[trim axis left]
			\begin{axis}[
				xlabel={Maximum solver standard deviation ($\sigma_\delta$)},
				ylabel={$\mathbb{E}\left[\min \left(\frac{\hat{l}(D_1)}{\hat{l}(D_2)},1\right) \right]$},
				xmode=log,
				ymax=1.0,
				ymin=-0.0,
				xmin=1e-3,
				xmax=0.3,
				width=0.9\textwidth,
				height=0.5\textwidth
				]
				\addplot[
				color=OliveGreen,
				mark=pentagon*,
				thick
				]
				table[
				col sep=comma,
				x=maxsolverstdsigma0.01,
				y=meansigma0.01,
				] {numerical_results/plotdata_acceptance_rate_true_0.1_current_0.1_proposal_0.1_samples_1000.csv};
				\addlegendentry{$\sigma_\eta=0.01$}
				\addplot[
				color=Fuchsia,
				mark=square*,
				thick
				]
				table[
				col sep=comma,
				x=maxsolverstdsigma0.025,
				y=meansigma0.025,
				] {numerical_results/plotdata_acceptance_rate_true_0.1_current_0.1_proposal_0.1_samples_1000.csv};
				\addlegendentry{$\sigma_\eta=0.025$}
				\addplot[
				color=Turquoise,
				mark=asterisk,
				thick
				]
				table[
				col sep=comma,
				x=maxsolverstdsigma0.05,
				y=meansigma0.05,
				] {numerical_results/plotdata_acceptance_rate_true_0.1_current_0.1_proposal_0.1_samples_1000.csv};
				\addlegendentry{$\sigma_\eta=0.05$}
				\addplot[
				color=Mahogany,
				mark=diamond*,
				thick
				]
				table[
				col sep=comma,
				x=maxsolverstdsigma0.1,
				y=meansigma0.1,
				] {numerical_results/plotdata_acceptance_rate_true_0.1_current_0.1_proposal_0.1_samples_1000.csv};	
				\addlegendentry{$\sigma_\eta=0.1$}
				\addplot[
				color=Dandelion,
				mark=pentagon*,
				thick
				]
				table[
				col sep=comma,
				x=maxsolverstdsigma0.25,
				y=meansigma0.25,
				] {numerical_results/plotdata_acceptance_rate_true_0.1_current_0.1_proposal_0.1_samples_1000.csv};
				\addlegendentry{$\sigma_\eta=0.25$}
				\addplot[
				color=WildStrawberry,
				mark=heart,
				thick
				]
				table[
				col sep=comma,
				x=maxsolverstdsigma0.5,
				y=meansigma0.5,
				] {numerical_results/plotdata_acceptance_rate_true_0.1_current_0.1_proposal_0.1_samples_1000.csv};
				\addlegendentry{$\sigma_\eta=0.5$}
				\addplot[
				color=MidnightBlue,
				mark=*,
				thick
				]
				table[
				col sep=comma,
				x=maxsolverstdsigma1,
				y=meansigma1,
				] {numerical_results/plotdata_acceptance_rate_true_0.1_current_0.1_proposal_0.1_samples_1000.csv};
				\addlegendentry{$\sigma_\eta=1$}
				\addplot[color=black, mark=none, domain=0.5:2e-4]{0};
				\addplot[color=black, mark=none, domain=0.5:2e-4]{1};
				\legend{}
			\end{axis}
		\end{tikzpicture}
		\caption{Expectation of the approximate likelihood ratio, truncated to the range $[0,1]$. \label{fig:likelytolikelyacceptance}}
	\end{subfigure}
	\caption{Computing the expectation of the approximate likelihood ratio given a discrete solution of the problem \eqref{eq:diff_eq}--\eqref{eq:diff_eq_boundary} perturbed by synthetic observation noise for $D^\ast = D_1 = D_2 =0.1$. \label{fig:likelytolikely}}
\end{figure}

\begin{figure}
	\centering
	\begin{subfigure}{\linewidth}
		\centering
		\begin{tikzpicture}[trim axis left]
			\begin{axis}[
				xlabel={Maximum solver standard deviation ($\sigma_\delta$)},
				ylabel={$\mathbb{E}\left[\frac{\hat{l}(D_1)}{\hat{l}(D_2)} \right]$},
				xmode=log,
				ymode=log,
				xmin=1e-3,
				xmax=0.3,
				ymax=1e35,
				legend pos=north west,
				legend columns=2,
				width=0.9\textwidth,
				height=0.5\textwidth
				]
				\addplot[
				color=OliveGreen,
				mark=pentagon*,
				thick
				]
				table[
				col sep=comma,
				x=maxsolverstdsigma0.01,
				y=meansigma0.01,
				] {numerical_results/plotdata_likelihood_ratio_true_0.1_current_0.1_proposal_0.08_samples_1000.csv};
				\addlegendentry{$\sigma_\eta=0.01$}
				\addplot[
				color=Fuchsia,
				mark=square*,
				thick
				]
				table[
				col sep=comma,
				x=maxsolverstdsigma0.025,
				y=meansigma0.025,
				] {numerical_results/plotdata_likelihood_ratio_true_0.1_current_0.1_proposal_0.08_samples_1000.csv};
				\addlegendentry{$\sigma_\eta=0.025$}
				\addplot[
				color=Turquoise,
				mark=asterisk,
				thick
				]
				table[
				col sep=comma,
				x=maxsolverstdsigma0.05,
				y=meansigma0.05,
				] {numerical_results/plotdata_likelihood_ratio_true_0.1_current_0.1_proposal_0.08_samples_1000.csv};
				\addlegendentry{$\sigma_\eta=0.05$}
				\addplot[
				color=Mahogany,
				mark=diamond*,
				thick
				]
				table[
				col sep=comma,
				x=maxsolverstdsigma0.1,
				y=meansigma0.1,
				] {numerical_results/plotdata_likelihood_ratio_true_0.1_current_0.1_proposal_0.08_samples_1000.csv};	
				\addlegendentry{$\sigma_\eta=0.1$}
				\addplot[
				color=Dandelion,
				mark=pentagon*,
				thick
				]
				table[
				col sep=comma,
				x=maxsolverstdsigma0.25,
				y=meansigma0.25,
				] {numerical_results/plotdata_likelihood_ratio_true_0.1_current_0.1_proposal_0.08_samples_1000.csv};
				\addlegendentry{$\sigma_\eta=0.25$}
				\addplot[
				color=WildStrawberry,
				mark=heart,
				thick
				]
				table[
				col sep=comma,
				x=maxsolverstdsigma0.5,
				y=meansigma0.5,
				] {numerical_results/plotdata_likelihood_ratio_true_0.1_current_0.1_proposal_0.08_samples_1000.csv};
				\addlegendentry{$\sigma_\eta=0.5$}
				\addplot[
				color=MidnightBlue,
				mark=*,
				thick
				]
				table[
				col sep=comma,
				x=maxsolverstdsigma1,
				y=meansigma1,
				] {numerical_results/plotdata_likelihood_ratio_true_0.1_current_0.1_proposal_0.08_samples_1000.csv};
				\addlegendentry{$\sigma_\eta=1$}
				\addplot[color=black, mark=none, domain=0.5:2e-4]{1};
			\end{axis}
		\end{tikzpicture}
		\caption{Expectation of the approximate likelihood ratio. \label{fig:likelytounlikelyratio}}
	\end{subfigure}
	\par\bigskip
	\begin{subfigure}{\linewidth}
		\centering
		\begin{tikzpicture}[trim axis left]
			\begin{axis}[
				xlabel={Maximum solver standard deviation ($\sigma_\delta$)},
				ylabel={$\mathbb{E}\left[\min \left(\frac{\hat{l}(D_1)}{\hat{l}(D_2)},1\right) \right]$},
				xmode=log,
				ymax=1.0,
				ymin=-0.0,
				xmin=1e-3,
				xmax=0.3,
				legend pos=north east,
				width=0.9\textwidth,
				height=0.5\textwidth
				]
				\addplot[
				color=OliveGreen,
				mark=pentagon*,
				thick
				]
				table[
				col sep=comma,
				x=maxsolverstdsigma0.01,
				y=meansigma0.01,
				] {numerical_results/plotdata_acceptance_rate_true_0.1_current_0.1_proposal_0.08_samples_1000.csv};
				\addlegendentry{$\sigma_\eta=0.01$}
				\addplot[
				color=Fuchsia,
				mark=square*,
				thick
				]
				table[
				col sep=comma,
				x=maxsolverstdsigma0.025,
				y=meansigma0.025,
				] {numerical_results/plotdata_acceptance_rate_true_0.1_current_0.1_proposal_0.08_samples_1000.csv};
				\addlegendentry{$\sigma_\eta=0.025$}
				\addplot[
				color=Turquoise,
				mark=asterisk,
				thick
				]
				table[
				col sep=comma,
				x=maxsolverstdsigma0.05,
				y=meansigma0.05,
				] {numerical_results/plotdata_acceptance_rate_true_0.1_current_0.1_proposal_0.08_samples_1000.csv};
				\addlegendentry{$\sigma_\eta=0.05$}
				\addplot[
				color=Mahogany,
				mark=diamond*,
				thick
				]
				table[
				col sep=comma,
				x=maxsolverstdsigma0.1,
				y=meansigma0.1,
				] {numerical_results/plotdata_acceptance_rate_true_0.1_current_0.1_proposal_0.08_samples_1000.csv};	
				\addlegendentry{$\sigma_\eta=0.1$}
				\addplot[
				color=Dandelion,
				mark=pentagon*,
				thick
				]
				table[
				col sep=comma,
				x=maxsolverstdsigma0.25,
				y=meansigma0.25,
				] {numerical_results/plotdata_acceptance_rate_true_0.1_current_0.1_proposal_0.08_samples_1000.csv};
				\addlegendentry{$\sigma_\eta=0.25$}
				\addplot[
				color=WildStrawberry,
				mark=heart,
				thick
				]
				table[
				col sep=comma,
				x=maxsolverstdsigma0.5,
				y=meansigma0.5,
				] {numerical_results/plotdata_acceptance_rate_true_0.1_current_0.1_proposal_0.08_samples_1000.csv};
				\addlegendentry{$\sigma_\eta=0.5$}
				\addplot[
				color=MidnightBlue,
				mark=*,
				thick
				]
				table[
				col sep=comma,
				x=maxsolverstdsigma1,
				y=meansigma1,
				] {numerical_results/plotdata_acceptance_rate_true_0.1_current_0.1_proposal_0.08_samples_1000.csv};
				\addlegendentry{$\sigma_\eta=1$}
				\addplot[color=black, mark=none, domain=0.5:2e-4]{0};
				\addplot[color=black, mark=none, domain=0.5:2e-4]{1};
				\legend{}
			\end{axis}
		\end{tikzpicture}
		\caption{Expectation of the approximate likelihood ratio, truncated to the range $[0,1]$. \label{fig:likelytounlikelyacceptance}}
	\end{subfigure}
	\caption{Computing the expectation of the approximate likelihood ratio given a discrete solution of the problem \eqref{eq:diff_eq}--\eqref{eq:diff_eq_boundary} perturbed by synthetic observation noise for $D^\ast = D_2 =0.1$ and $D_1 = 0.08$. \label{fig:likelytounlikely}}
\end{figure}

\begin{figure}
	\centering
	\begin{subfigure}{\linewidth}
		\centering
		\begin{tikzpicture}[trim axis left]
			\begin{axis}[
				xlabel={Maximum solver standard deviation ($\sigma_\delta$)},
				ylabel={$\mathbb{E}\left[\frac{\hat{l}(D_1)}{\hat{l}(D_2)} \right]$},
				xmode=log,
				ymode=log,
				xmin=1e-3,
				xmax=0.3,
				legend pos=north west,
				legend columns=2,
				width=0.9\textwidth,
				height=0.5\textwidth
				]
				\addplot[
				color=OliveGreen,
				mark=pentagon*,
				thick
				]
				table[
				col sep=comma,
				x=maxsolverstdsigma0.01,
				y=meansigma0.01,
				] {numerical_results/plotdata_likelihood_ratio_true_0.1_current_0.08_proposal_0.1_samples_1000.csv};
				\addlegendentry{$\sigma_\eta=0.01$}
				\addplot[
				color=Fuchsia,
				mark=square*,
				thick
				]
				table[
				col sep=comma,
				x=maxsolverstdsigma0.025,
				y=meansigma0.025,
				] {numerical_results/plotdata_likelihood_ratio_true_0.1_current_0.08_proposal_0.1_samples_1000.csv};
				\addlegendentry{$\sigma_\eta=0.025$}
				\addplot[
				color=Turquoise,
				mark=asterisk,
				thick
				]
				table[
				col sep=comma,
				x=maxsolverstdsigma0.05,
				y=meansigma0.05,
				] {numerical_results/plotdata_likelihood_ratio_true_0.1_current_0.08_proposal_0.1_samples_1000.csv};
				\addlegendentry{$\sigma_\eta=0.05$}
				\addplot[
				color=Mahogany,
				mark=diamond*,
				thick
				]
				table[
				col sep=comma,
				x=maxsolverstdsigma0.1,
				y=meansigma0.1,
				] {numerical_results/plotdata_likelihood_ratio_true_0.1_current_0.08_proposal_0.1_samples_1000.csv};	
				\addlegendentry{$\sigma_\eta=0.1$}
				\addplot[
				color=Dandelion,
				mark=pentagon*,
				thick
				]
				table[
				col sep=comma,
				x=maxsolverstdsigma0.25,
				y=meansigma0.25,
				] {numerical_results/plotdata_likelihood_ratio_true_0.1_current_0.08_proposal_0.1_samples_1000.csv};
				\addlegendentry{$\sigma_\eta=0.25$}
				\addplot[
				color=WildStrawberry,
				mark=heart,
				thick
				]
				table[
				col sep=comma,
				x=maxsolverstdsigma0.5,
				y=meansigma0.5,
				] {numerical_results/plotdata_likelihood_ratio_true_0.1_current_0.08_proposal_0.1_samples_1000.csv};
				\addlegendentry{$\sigma_\eta=0.5$}
				\addplot[
				color=MidnightBlue,
				mark=*,
				thick
				]
				table[
				col sep=comma,
				x=maxsolverstdsigma1,
				y=meansigma1,
				] {numerical_results/plotdata_likelihood_ratio_true_0.1_current_0.08_proposal_0.1_samples_1000.csv};
				\addlegendentry{$\sigma_\eta=1$}
				\addplot[color=black, mark=none, domain=0.5:2e-4]{1};
				\legend{}
			\end{axis}
		\end{tikzpicture}
		\caption{Expectation of the approximate likelihood ratio. \label{fig:unlikelytolikelyratio}}
	\end{subfigure}
	\par\bigskip
	\begin{subfigure}{\linewidth}
		\centering
		\begin{tikzpicture}[trim axis left]
			\begin{axis}[
				xlabel={Maximum solver standard deviation ($\sigma_\delta$)},
				ylabel={$\mathbb{E}\left[\min \left(\frac{\hat{l}(D_1)}{\hat{l}(D_2)},1\right) \right]$},
				xmode=log,
				ymax=1.0,
				ymin=-0.0,
				xmin=1e-3,
				xmax=0.3,
				width=0.9\textwidth,
				height=0.5\textwidth,
				legend pos=south west,
				legend columns=2
				]
				\addplot[
				color=OliveGreen,
				mark=pentagon*,
				thick
				]
				table[
				col sep=comma,
				x=maxsolverstdsigma0.01,
				y=meansigma0.01,
				] {numerical_results/plotdata_acceptance_rate_true_0.1_current_0.08_proposal_0.1_samples_1000.csv};
				\addlegendentry{$\sigma_\eta=0.01$}
				\addplot[
				color=Fuchsia,
				mark=square*,
				thick
				]
				table[
				col sep=comma,
				x=maxsolverstdsigma0.025,
				y=meansigma0.025,
				] {numerical_results/plotdata_acceptance_rate_true_0.1_current_0.08_proposal_0.1_samples_1000.csv};
				\addlegendentry{$\sigma_\eta=0.025$}
				\addplot[
				color=Turquoise,
				mark=asterisk,
				thick
				]
				table[
				col sep=comma,
				x=maxsolverstdsigma0.05,
				y=meansigma0.05,
				] {numerical_results/plotdata_acceptance_rate_true_0.1_current_0.08_proposal_0.1_samples_1000.csv};
				\addlegendentry{$\sigma_\eta=0.05$}
				\addplot[
				color=Mahogany,
				mark=diamond*,
				thick
				]
				table[
				col sep=comma,
				x=maxsolverstdsigma0.1,
				y=meansigma0.1,
				] {numerical_results/plotdata_acceptance_rate_true_0.1_current_0.08_proposal_0.1_samples_1000.csv};	
				\addlegendentry{$\sigma_\eta=0.1$}
				\addplot[
				color=Dandelion,
				mark=pentagon*,
				thick
				]
				table[
				col sep=comma,
				x=maxsolverstdsigma0.25,
				y=meansigma0.25,
				] {numerical_results/plotdata_acceptance_rate_true_0.1_current_0.08_proposal_0.1_samples_1000.csv};
				\addlegendentry{$\sigma_\eta=0.25$}
				\addplot[
				color=WildStrawberry,
				mark=heart,
				thick
				]
				table[
				col sep=comma,
				x=maxsolverstdsigma0.5,
				y=meansigma0.5,
				] {numerical_results/plotdata_acceptance_rate_true_0.1_current_0.08_proposal_0.1_samples_1000.csv};
				\addlegendentry{$\sigma_\eta=0.5$}
				\addplot[
				color=MidnightBlue,
				mark=*,
				thick
				]
				table[
				col sep=comma,
				x=maxsolverstdsigma1,
				y=meansigma1,
				] {numerical_results/plotdata_acceptance_rate_true_0.1_current_0.08_proposal_0.1_samples_1000.csv};
				\addlegendentry{$\sigma_\eta=1$}
				\addplot[color=black, mark=none, domain=0.5:2e-4]{0};
				\addplot[color=black, mark=none, domain=0.5:2e-4]{1};
			\end{axis}
		\end{tikzpicture}
		\caption{Expectation of the approximate likelihood ratio, truncated to the range $[0,1]$. \label{fig:unlikelytolikelyacceptance}}
	\end{subfigure}
	\caption{Computing the expectation of the approximate likelihood ratio given a discrete solution of the problem \eqref{eq:diff_eq}--\eqref{eq:diff_eq_boundary} perturbed by synthetic observation noise for $D^\ast = D_1 =0.1$ and $D_2=0.08$. \label{fig:unlikelytolikely}}
\end{figure}

Next, in Figure~\ref{fig:likelytolikely}, we consider the case where $D_2 = D_1 = D^\ast = 0.1$, i.e., both the current state and proposal match the true parameter value. We plot the expectation of the approximate likelihood ratio in Figure~\ref{fig:likelytolikelyratio}. Here we observe a significant blowup in the expectation at approximately $\sigma_\eta=\sigma_\delta$, as predicted by Theorem~\ref{thm:moments}. For sufficiently small values of $\sigma_\delta$, we observe that the expectation of the likelihood ratio converges to the true value of 1. To better understand the divergence of the approximate likelihood ratio, we plot the expectation of the minimum of the ratio and 1 in Figure~\ref{fig:likelytolikelyacceptance}, which in the case of $D_1=D_2$ directly corresponds to the acceptance probability in the Metropolis-Hastings algorithm, see e.g.~\cite[Ch. 7]{Robert2004}. Here, it becomes clear that when the approximate likelihood ratio diverges, the acceptance probability converges to the value 0.5. This behavior gives insight into the precise mechanism behind the divergence. Namely, in this regime, the stochastic error in the approximate likelihood evaluations is sufficiently large that the ratio consists of a numerator and denominator with wildly differing orders of magnitude. Hence, the random variable converges to a distribution with two states, one of which diverges to infinity, the other converges to zero. Each state has probability 0.5.

We now present similar plots for transitions between the values 0.08 and 0.1. In Figure~\ref{fig:likelytounlikely}, we consider the transition from $D_2=D^\ast=0.1$ to $D_1=0.08$. Here we note that Figure~\ref{fig:likelytounlikelyacceptance} now does not in general represent the acceptance probability as we neglect the transition probabilities of the proposal distribution in the Metropolis-Hastings algorithm and the prior distribution. However, we still gain insight into the behavior of the acceptance probability. We also note that this subfigure will correspond to the acceptance probability when considering uninformed prior distributions and symmetric proposal distributions.

We observe a similar blow-up pattern in Figure~\ref{fig:likelytounlikelyratio} to that in Figure~\ref{fig:likelytolikelyratio}, but now observe that not all curves converge to 1 as $\sigma_\delta$ decreases. For sufficiently small values of $\sigma_\eta$, the computed approximate likelihood ratio correctly takes small values to denote the low likelihood of the transition being accepted. In Figure~\ref{fig:likelytounlikelyacceptance}, we see a similar convergence to the value 0.5 as $\sigma_\delta$ increases. As opposed to Figure~\ref{fig:likelytolikelyacceptance}, we note that the curves converge to different values for small $\sigma_\delta$, indicating that we are able to retrieve a sensible acceptance probability.

Similarly, Figure~\ref{fig:unlikelytolikely} presents equivalent plots for the transition from 0.1 to 0.08. In Figure~\ref{fig:unlikelytolikelyratio}, we see a similar behavior to the previous figures, in that the curves start to blow up around $\sigma_\eta=\sigma_\delta$. For small values of $\sigma_\delta$, we see the inverse of the behavior observed in Figure~\ref{fig:likelytounlikelyratio}. Namely, the expectation of the ratio converges to one from above as $\sigma_\eta$ increases. Considering Figure~\ref{fig:unlikelytolikelyacceptance}, we note a similar convergence towards the value 0.5 for larger values of $\sigma_\delta$ as in the previous figures. However, for smaller values of $\sigma_\delta$ we note that the curves are not sequentially ordered in their values of $\sigma_\eta$. The leftmost part of the curves decrease as $\sigma_\eta$ increases from 0.01 through 0.1, but then increase again as $\sigma_\eta$ further increases to 1. It appears that the initial decrease is due to the decrease of the corresponding sequence of curves in Figure~\ref{fig:unlikelytolikelyratio}, i.e., when the expectation of the approximate likelihood ratio decreases, the minimum of 1 and this ratio will more often take values smaller than 1. As $\sigma_\eta$ increases further, it is likely that the likelihood function becomes sufficiently flat that the variance induced by the Monte Carlo solver no longer has a strong impact on the computed approximate likelihood value. Hence, the variance of the approximate likelihood ratio will decrease, and the truncated ratio converges in expectation to 1.

\section{Conclusions}
\label{sec:conclusion}

We have conducted an initial study on the effect of using Monte Carlo simulations to evaluate approximate likelihood ratios, as used in Metropolis-Hastings. In Section~\ref{sec:MCMC} we presented a theorem, stating that the expectation of this ratio exists if the difference between the covariance matrix of the Monte Carlo error in the denominator $\delta_2$ and that of the observation error $\eta$ is positive definite. As the variance of the approximation error further decreases, the distribution of this ratio gains an increasing number of moments. We conducted a numerical study of the behavior of this ratio, confirming the existence criterion outlined in the theorem for transitions between two states with differing true likelihood values. When the forward model errors variance grows too large, we observe that the approximate likelihood ratio, truncated to the range $[0,1]$, converges in expectation to the value 0.5. Hence, the approximate likelihood evaluations no longer provide any useful information for the accept-reject step of the Metropolis-Hastings algorithm.

We conclude that it is likely feasible to use Metropolis-Hastings sampling in settings where one must accept relatively large stochastic errors. However, it is important to assure that the forward model variance does not surpass the limits established in Theorem~\ref{thm:moments}. In addition, there is a clear bias in the approximate likelihood ratio and corresponding acceptance probability that decreases with the the forward model variance. These observation are useful for settings such as Monte Carlo simulation, where one can explicitly control the variance of the forward model. Theorem~\ref{thm:moments} also provides closed-form expressions for the moments of the distribution of the approximate likelihood ratio. We expect these expressions to be useful for algorithm design. One can use, e.g., estimates for the variance of the approximate likelihood ratio to produce error bounds for the computed acceptance probability and determine how many additional trajectories must be simulated in the forward model to make a clear acceptance or rejection decision in the Metropolis-Hastings algorithm.

\section*{Acknowledgments}
We thank the anonymous reviewers for their work on reviewing this manuscript. We thank Robert Scheichl and Josef Martínek for multiple productive discussions on this topic. Emil Løvbak was funded by the Deutsche Forschungsgemeinschaft (DFG, German Research Foundation) – Project-ID 563450842. The authors acknowledge support by the state of Baden-Württemberg through bwHPC.

\bibliographystyle{spmpsci}
\bibliography{noisyMCMC,Martinek}

\end{document}